\newcommand{\ie}{\emph{i.e.}}
\newcommand{\cf}{\emph{cf}}
\newcommand{\Real}{\mathbb{R}}
\newcommand{\eps}{\varepsilon}
\newcommand{\Cut}{\mathcal{C}}
\newcommand{\inj}{\rho}
\newcommand{\dist}{\mathop{\mathrm{dist}}\nolimits}
\newcommand{\eqskip}{ \vspace*{2mm}\\ }
\newcommand{\ds}{\displaystyle}
\newcommand\soutD{\bgroup\markoverwith
{\textcolor{red}{\rule[.5ex]{2pt}{1pt}}}\ULon}
\newtheorem{Lemma}{Lemma}
\newtheorem{Theorem}{Theorem}
\newtheorem{Corollary}{Corollary}
\newtheorem{Proposition}{Proposition}
\newtheorem{Conjecture}{Conjecture}
\theoremstyle{definition}
\newtheorem{Remark}{Remark}
\begin{document}
%
\title[Alexandrov's conjecture and the cut locus
of a surface]{
Alexandrov's isodiametric conjecture and the cut locus
of a surface}
\author{Pedro Freitas \ and \ David Krej\v{c}i\v{r}\'{\i}k}

\address{Department of Mathematics, Faculty of Human Kinetics {\rm and}
 Group of Mathematical Physics, Universidade de Lisboa, Complexo Interdisciplinar, Av.~Prof.~Gama Pinto~2, 
 P-1649-003 Lisboa, Portugal}

\email{freitas@cii.fc.ul.pt}

\address{
Department of Theoretical Physics,
Nuclear Physics Institute,
Academy of Sciences,
250\,68 \v{R}e\v{z}, Czech Republic
}
\email{krejcirik@ujf.cas.cz}

\date{\small 27 May 2014}

\thanks{The research of the first author was partially supported by FCT's project PEst-OE/MAT/UI0208/2011.
The research of the second author was supported by 
RVO61389005 and the GACR grant No.\ P203/11/0701.
\newline\indent
\textit{2010 MSC}.
Primary 53C45, 53A05; 53C22, 52A15; Secondary 53A07. 
}

\begin{abstract}
We prove that Alexandrov's conjecture relating the area and diameter of a convex surface holds for
the surface of a general ellipsoid. This is a direct consequence of a more general result which estimates the
deviation from the optimal conjectured bound in terms of the length of the cut locus of a point on the
surface. We also prove that the natural extension of the conjecture to general dimension holds among closed
convex spherically symmetric Riemannian manifolds. Our results are based on a new symmetrization procedure
which we believe to be interesting in its own right.
\end{abstract}

\maketitle

\section{Introduction}
%
Let~$\Sigma$ be a closed oriented surface.
(By a \emph{surface} we always mean a connected 
smooth $2$-dimensional Riemannian manifold;
\emph{closed} means compact and without boundary.)	
Unless otherwise stated, we assume that~$\Sigma$ is \emph{convex},
\ie~the Gauss curvature~$K$ is non-negative and not identically equal to zero.
Let~$A$ and~$D$ denote the surface \emph{area} 
and the (intrinsic) \emph{diameter} of~$\Sigma$, respectively.

This paper is concerned with the following conjecture
raised by A.~D.\ Alexandrov in 1955 \cite{Alexandrov_1955}:
\begin{Conjecture}\label{Conj}
For any closed oriented convex surface~$\Sigma$,
\begin{equation}
  \frac{A}{D^2} \leq \frac{\pi}{2}\approx 1.5708
  \,.
\end{equation}
\end{Conjecture}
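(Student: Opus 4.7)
My plan is to reduce the conjecture to a spherically symmetric comparison surface via a symmetrization that preserves area and nonnegative curvature while not increasing the diameter. The conjectured extremum is the degenerate doubly-covered flat disk of radius~$R$, where $A = 2\pi R^2$ and $D = 2R$ yield $A/D^2 = \pi/2$. Since this extremum is itself spherically symmetric, and the inequality in that class is a tractable ODE statement, symmetrization is the natural attack.

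Concretely, I would fix a point $p\in\Sigma$ and introduce geodesic polar coordinates $(r,\theta)$ around~$p$, writing the metric as $dr^2 + J(r,\theta)^2\,d\theta^2$ with $J$ solving the Jacobi equation $J_{rr} + K J = 0$, $J(0,\theta)=0$, $J_r(0,\theta)=1$. These coordinates are valid up to the cut locus $\Cut(p)$, so in each direction the radial coordinate runs up to $\inj(\theta)$, and
\begin{equation*}
  A \;=\; \int_0^{2\pi}\!\int_0^{\inj(\theta)} J(r,\theta)\,dr\,d\theta,
\end{equation*}
while $D$ is typically realized along a geodesic that crosses $\Cut(p)$. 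I would then replace the anisotropic metric by a spherically symmetric one $dr^2 + \tilde J(r)^2\,d\theta^2$ on an interval $[0,L]$ via an equi-area rearrangement in~$\theta$, chosen so that the area of every geodesic ball around the origin is preserved, with $\tilde J(0)=\tilde J(L)=0$. Provided $K\ge 0$ transfers to $\tilde J''\le 0$, concavity together with $\tilde J'(0)=1$ and $\tilde J'(L)=-1$ forces $\tilde J(r)\le \min(r,L-r)$, and hence
\begin{equation*}
  A \;=\; 2\pi\int_0^L \tilde J(r)\,dr \;\le\; \frac{\pi}{2}L^2 \;=\; \frac{\pi}{2}\tilde D^2,
\end{equation*}
where $\tilde D=L$ is the pole-to-pole distance of the symmetrized surface.

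The main obstacle is proving the diameter comparison $\tilde D\le D$. A pole-to-pole geodesic of the symmetrized surface has length exactly $L$, whereas the corresponding geodesic in the original $\Sigma$ runs from $p$ to $\Cut(p)$ and then continues onto the far side of the cut locus, so that $D$ ought to exceed $\tilde D$ by an amount measured by the geometry of $\Cut(p)$. This matches the abstract's promise of a quantitative estimate for $\pi/2-A/D^2$ in terms of the length of $\Cut(p)$, and it suggests that the correct symmetrization must be adapted to the cut-locus structure. Verifying this diameter monotonicity — together with the preservation of convexity and area under a rearrangement that in general relates a smooth surface to a singular, tree-like cut locus — is the technical heart of the problem; it is presumably the reason the authors settle for ellipsoids (where $\Cut(p)$ is explicit at a judicious~$p$) and for spherically symmetric higher-dimensional manifolds, rather than attacking the unrestricted conjecture.
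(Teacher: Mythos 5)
The statement you are asked to prove is Alexandrov's conjecture itself, which is \emph{open}: the paper does not prove it, and neither do you. What the paper actually establishes is Theorem~\ref{Thm.strong} (equivalently Theorem~\ref{Thm.main}), namely $A/D^2\le \pi/2 + \tfrac12 M_p(1-M_p/4\pi)\le \pi/2+|\Cut_p|/\inj$, which yields the conjecture only when some point~$p$ has $|\Cut_p|=0$, i.e.\ a one-point cut locus (Corollary~\ref{Cor.single}, hence ellipsoids via their umbilics). Your sketch, by your own admission, leaves unproved exactly the step that would close the general case --- the diameter comparison $\tilde D\le D$ for an equi-area symmetrization that is forced to close up at both poles --- so it is not a proof of the Conjecture, and no such proof should be expected from this route without a genuinely new idea.

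It is worth seeing precisely how the paper sidesteps the step you cannot do, because the two symmetrizations are different in a structural way. The paper's map \eqref{sym} rescales each ray so that the cut distance becomes $D_p$ in \emph{every} direction before averaging over~$\theta$; this makes the diameter comparison trivial ($D_p\le D$ by definition) and only \emph{increases} area ($A_p\ge A$, since $D_p/d_p(\theta)\ge 1$), at the price that the resulting surface of revolution need not close at the far pole: it ends on a circle~$\Gamma_p$ of length $L_p=D_pM_p$, and the whole deviation term in Theorem~\ref{Thm.strong} is the cost of that boundary circle, estimated via formula \eqref{crucial} for $|\Cut_p|$. Your version instead demands $\tilde J(L)=0$ and exact area preservation, and then everything hinges on $L\le D$, which is false in spirit for the paper's construction (there $f_p(D_p)$ need not vanish at all) and unsubstantiated for yours. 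Two further gaps in your sketch: an equi-area rearrangement in~$\theta$ does not obviously produce a profile satisfying a Jacobi equation with non-negative curvature (the paper's weighted average does, because $k_p$ is a ratio of non-negative integrals), and $\tilde J'(L)=-1$ presupposes smooth closing at the far pole, which is precisely what fails in general. The concavity argument $\tilde J\le\min(r,L-r)\Rightarrow A\le \tfrac{\pi}{2}L^2$ is fine and is essentially the paper's two-lemma comparison with $\tau=D_p/2$ in the special case $L_p=0$; but as stated your proposal proves the Conjecture only in the cases the paper already covers, not in general.
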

\noindent
It can easily be seen that the value on the right hand side
of this inequality is attained 
for the respective quotient by the doubly-covered disc, 
\ie~a degenerate surface formed by
gluing two flat discs along their boundaries.

Due to the inequality relating the area and the diameter of a planar convex domain,
namely, $A/D^2\leq \pi/4$, with equality attained only for the disk, it follows that the
conjecture holds among degenerate surfaces made of two copies of the same convex
planar domain glued at the boundary.
However, for general convex surfaces the conjecture remains open with the 
strongest result so far being, 
to the best of our knowledge, that by Calabi and Cao~\cite{Calabi-Cao_1992}
yielding
\[
 \frac{A}{D^2} \leq \frac{8}{\pi}\approx 2.5465,
\]
while other (weaker) bounds may be found in~\cite{Sakai_89,Shioya_93}. 
We note that the proof of the
above inequality by Calabi and Cao is done indirectly 
by means of eigenvalue estimates.

Even within restricted families of surfaces, 
such as the boundary of parallel\-e\-pi\-peds, the problem
does not seem to be easy, mainly due to the difficulties arising in the computation of the intrinsic diameter of
a given surface. The classes for which the conjecture has been proved are tetrahedra (Makai~\cite{Makai_1973} and
Zalgaller~\cite{Zalgaller_2007}), rectangular parallelepipeds (Nikorov and Nikorova~\cite{Nikorov-Nikorova_2008}) and
surfaces of revolution (Makuha~\cite{Makuha_1966} 
and Abreu and the first author~\cite{Abreu-Freitas_2002}).
Except for the last family of surfaces, which contains the double disk, the optimizers in the first
and second cases are not degenerate, being the regular tetrahedron and the parallelepiped with edge lengths $1,1$ and
$\sqrt{2}$, respectively, with the corresponding inequalities satisfied for surfaces within each family being
\[
 \begin{array}{ccc}
  \frac{\ds A}{\ds D^2} \leq \frac{\ds 3\sqrt{3}}{\ds 4}\approx 1.299 
  & \quad \mbox{and} \quad &
  \frac{\ds A}{\ds D^2} \leq \frac{\ds 1+2\sqrt{2}}{\ds 3}\approx 1.276.
 \end{array}
\]

We point out that the result for surfaces of revolution given in~\cite{Makuha_1966,Abreu-Freitas_2002} is actually stronger,
in that the convexity restriction is replaced by the condition that the surface is diffeomorphic to the sphere and isometric
to a closed surface in $\Real^{3}$. In fact, it is suggested in~\cite{Abreu-Freitas_2002} that it might be possible
to replace convexity by this weaker condition also in the case of general surfaces.

We note that by the 
Kuiper-Nash embedding theorem, the example in Section~3.1 in~\cite{Abreu-Freitas_2002} may be $C^1$ isometrically embedded
in~$\Real^{3}$, and we thus have that there exists a surface in~$\Real^{3}$, homeomorphic to the sphere, for which the
quotient~$A/D^2$ may be made to be arbitrarily large. An interesting question is thus whether the conjecture may hold for
more general surfaces as in the case of surfaces of revolution, or whether having non-negative curvature is an essential condition.

Here we shall go one step further and, having proven the result for spherically symmetric $d$-manifolds in $\Real^{d+1}$  -- see
Theorem~\ref{Thm.Rot} in Appendix~\ref{Sec.Rot} --, we conjecture that, under some assumptions, a similar result will hold in any dimension.

In this paper we relate the quotient between $A$ and $D^2$ to the cut-locus geometry
of the surface~$\Sigma$, bounding it by the conjectured optimal value plus a deviation term proportional to the
$1$-dimensional Hausdorff measure of the cut-locus. This allows us to prove the conjecture in the case of closed
convex surfaces having one point for which the cut locus reduces to a single point -- see Corollary~\ref{Cor.single} below.
Our proof is based on a symmetrization procedure which transforms a given surface into a surface of
revolution, while ensuring that the quotient $A/D^2$ does not decrease. However, the resulting surface is not necessarily
closed, and the extra term depending on the measure of the cut locus appears as a way of estimating the length of
the boundary circle.

In order to state our results, we need some standard notions related to the geometry of a surface.
Given a point $p \in \Sigma$, let us consider
a family of geodesics~$\gamma_\theta$ emanating from~$p$
with initial direction~$\theta \in S^1$.
The \emph{distance to the cut point of~$p$ along~$\gamma_\theta$},
denoted here by $d_p(\theta)$, 
is defined as the supremum over all distances~$t$ 
for which~$\gamma_\theta$ is the minimizing geodesic,
\ie~$\dist(p,\gamma_\theta(t))=t$. 
We clearly have the global bounds 
$
  \inj \leq d_p(\theta) \leq D
$,
where~$\inj$ denotes the \emph{injectivity radius} of~$\Sigma$.
The \emph{cut locus} of~$p$,
\ie~the set of all cut points of~$p$, 
is denoted by~$\Cut_p$.
A subset of~$\Cut_p$ is formed by \emph{conjugate cut points},
\ie, roughly, those points $q \in \Cut_p$ which can 
``almost'' be joined with~$p$
via a $1$-parameter family of geodesics $\theta \mapsto \gamma_\theta$
(the precise definition is given by means of the vanishing 
of a \emph{Jacobi field} along~$\gamma_\theta$). 

It is well known that~$\Cut_p$ is a set 
of Riemannian measure zero.
Moreover, $\Cut_p$~has finite $1$-dimensional Hausdorff measure
\cite{Hebda_1994,Itoh_1996} that we denote by~$|\Cut_p|$ 
and call the \emph{total length} of~$\Cut_p$.
One of the results of this paper is the following bound.
\begin{Theorem}\label{Thm.main}
Let~$\Sigma$ be a closed oriented convex surface.
Suppose that there is a point $p \in \Sigma$ such that
the set of conjugate points in the cut locus~$\Cut_p$ is countable.
Then
\begin{equation}
  \frac{A}{D^2} \leq \frac{\pi}{2}
  + \frac{\displaystyle 
  |\Cut_p|}{\inj}
  \,.
\end{equation}
\end{Theorem}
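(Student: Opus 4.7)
The plan is to push $\Sigma$ into a non-closed surface of revolution (a ``cap'') $\tilde\Sigma$ without decreasing the quotient $A/D^2$, and then to prove the bound on $\tilde\Sigma$ by an elementary one-dimensional argument. Throughout, I would work in geodesic polar coordinates $(r,\theta)\in[0,d_p(\theta))\times S^1$ centered at $p$, in which the metric on $\Sigma\setminus\Cut_p$ reads $dr^2+G(r,\theta)^2\,d\theta^2$ with initial data $G(0,\theta)=0$, $\partial_r G(0,\theta)=1$, and Jacobi equation $\partial_r^2 G=-KG\leq 0$, so that $G(\cdot,\theta)$ is concave and in particular $G(r,\theta)\leq r$. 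The countability hypothesis on the conjugate cut points ensures that for almost every $\theta$ the Jacobi field is strictly positive all the way up to $d_p(\theta)$, so the polar chart is regular up to the cut distance and the endpoint map $\theta\mapsto \gamma_\theta(d_p(\theta))$ is well-defined a.e.

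The next step is to define the symmetrized surface of revolution with metric $dr^2+\tilde g(r)^2 d\tilde\theta^2$ on $[0,L]\times S^1$, where
\[
2\pi\,\tilde g(r):=\int_{\{\theta\in S^1\,:\,d_p(\theta)>r\}} G(r,\theta)\,d\theta
\]
is the length of the geodesic circle of radius $r$ in $\Sigma$, and $L:=\sup_\theta d_p(\theta)\leq D$. Fubini gives $\tilde A=2\pi\int_0^L\tilde g(r)\,dr=A$, and the Jacobi equation combined with the monotonicity of the sets $\{d_p>r\}$ should produce a distributional inequality $\tilde g''\leq 0$, so that $\tilde\Sigma$ is a convex cap. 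The key geometric claim is that the intrinsic diameter $\tilde D$ of $\tilde\Sigma$ does not exceed $D$, which one verifies by expressing distances on $\tilde\Sigma$ in terms of radial and angular contributions and comparing them to the corresponding paths in $\Sigma$.

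The boundary circle of $\tilde\Sigma$ has length $\ell:=2\pi\tilde g(L)$, and the decisive estimate is $\ell\leq 2|\Cut_p|$: under the countable-conjugate hypothesis, almost every cut point of $p$ is non-conjugate and is therefore reached by at least two distinct minimizing geodesics from $p$, so the map $\theta\mapsto \gamma_\theta(d_p(\theta))$ from $S^1$ to $\Cut_p$ is a.e.\ at least $2$-to-$1$ on its image; the total variation of the endpoint curve, which controls the length of the symmetrized boundary circle, is therefore at most twice the one-dimensional Hausdorff measure of $\Cut_p$. Combined with $L\geq\inj$ (meridional rays are minimizing at least up to the injectivity radius), this supplies the error term.

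The final step is a one-dimensional calculus argument on the cap $\tilde\Sigma$: concavity of $\tilde g$ together with $\tilde g(0)=0$, $\tilde g'(0)=1$ and $\tilde g(L)=\ell/(2\pi)$ gives $\tilde g(r)\leq\min(r,\,\tilde g(L)+L-r)$, and integrating this upper envelope yields $\tilde A\leq \pi L^2/2+L\ell/2$, whence
\[
\frac{A}{D^2}\leq \frac{\tilde A}{\tilde D^2}\leq \frac{\pi}{2}+\frac{\ell}{2L}\leq \frac{\pi}{2}+\frac{|\Cut_p|}{\inj},
\]
as required. The main obstacle is the rigorous verification that the symmetrization preserves (or increases) the quotient $A/D^2$, i.e.\ proving $\tilde D\leq D$ and the concavity of $\tilde g$ in spite of the sets $\{d_p>r\}$ changing with $r$; this is precisely where the countability hypothesis on conjugate cut points is essential, ensuring that the boundary-of-$\{d_p>r\}$ contributions to $\tilde g''$ come in pairs with the correct sign.
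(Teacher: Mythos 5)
Your overall strategy (symmetrize to a surface of revolution, bound the cap by $\tfrac{\pi}{2}L^2$ plus a boundary term, relate the boundary term to $|\Cut_p|$) matches the paper's, but the specific symmetrization you chose breaks down at the decisive step. Defining $2\pi\tilde g(r)=\int_{\{d_p>r\}}G(r,\theta)\,d\theta$ means that each integrand $\theta\mapsto G(r,\theta)\chi_{\{d_p(\theta)>r\}}$ is a concave function of $r$ that is truncated to zero at $r=d_p(\theta)$; such truncated functions are \emph{not} concave (concave up to $d_p(\theta)$, then a downward jump, then constant zero), and their average $\tilde g$ is in general neither concave nor even continuous. In particular, whenever the maximum $L=D_p$ of $d_p$ is attained only on a set of directions of measure zero (the generic case, e.g.\ a non-umbilical point of an ellipsoid), one has $\tilde g(L^-)=0$, so your ``boundary circle'' has length $\ell=0$ and your argument would prove Alexandrov's conjecture outright with no error term --- a clear sign something is wrong. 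The area is not lost at $r=L$ but continuously, as rays die at various radii in $[\rho_p,L]$, and your scheme has no mechanism to account for this; correspondingly, the inequality $\tilde g(r)\leq\tilde g(L)+(L-r)$ that you integrate against has no justification (it cannot follow from concavity, which fails, and it is false whenever a positive measure of directions reaches non-conjugate cut points at radii strictly between $\rho_p$ and $L$, since then $\tilde g(r)$ for $r$ slightly below such radii is of the order of $F_p(d_p(\theta),\theta)>0$ rather than of order $L-r$).

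The paper's fix is a \emph{rescaled} symmetrization: $f_p(s)=\frac{1}{2\pi}\int_{S^1}\frac{D_p}{d_p(\theta)}F_p\bigl(\frac{d_p(\theta)}{D_p}s,\theta\bigr)d\theta$ stretches every ray to the common length $D_p$, so that $f_p$ solves an honest Jacobi equation \eqref{Jacobi.bis} with non-negative curvature on all of $(0,D_p)$ (hence is concave), the area only increases ($A_p\geq A$), and all the ``deaths'' are pushed to $s=D_p$ where they form a genuine boundary circle of length $L_p=D_pM_p$. The estimate from the boundary side is then obtained not from concavity of the profile alone but from Fermi coordinates based on $\Gamma_p$ together with the Gauss--Bonnet theorem, which pins down $\oint\kappa_p=-2\pi$ and yields $A_p^{(2)}(\tau)\leq\pi(D_p-\tau)^2+L_p(D_p-\tau)$. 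Note also that the rescaling is exactly what produces the $1/\inj$ in the final bound: $M_p\leq\frac{2|\Cut_p|}{\rho}$ via formula \eqref{crucial}, and it is only for that formula --- not for the symmetrization or any concavity statement --- that the countability hypothesis on conjugate cut points is used; your attribution of that hypothesis to the concavity of $\tilde g$ is misplaced. (A minor point in your favour: you do not actually need $\tilde D\leq D$; since $\tilde A=A$ and $L=D_p\leq D$, comparing $A/D^2$ with $\tilde A/L^2$ suffices.)
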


As a consequence of this theorem, we get an estimate on the deviation from the conjectured optimal value
in terms of the cut locus of a point in $\Sigma$ with the smallest measure.
\begin{Corollary}\label{Cor.single}
Conjecture~\ref{Conj} holds true for any closed oriented convex surface~$\Sigma$ for which there is a
point~$p$ whose cut locus~$\Cut_p$ consists of a single point.
\end{Corollary}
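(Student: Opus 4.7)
The plan is to deduce the corollary as an immediate special case of Theorem~\ref{Thm.main}. Let $p \in \Sigma$ be a point with $\Cut_p = \{q\}$ consisting of a single point. First I would verify that the hypothesis of Theorem~\ref{Thm.main} is satisfied: the set of conjugate cut points of~$p$ is a subset of $\Cut_p = \{q\}$, which is finite and therefore a fortiori countable, so the hypothesis holds trivially.

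Next I would observe that the deviation term $|\Cut_p|/\inj$ on the right-hand side of Theorem~\ref{Thm.main} vanishes. Indeed, the $1$-dimensional Hausdorff measure of a singleton is zero, so $|\Cut_p| = 0$, while $\inj > 0$ because~$\Sigma$ is a smooth closed Riemannian manifold (so that the global injectivity radius is bounded below by a positive constant). Substituting into Theorem~\ref{Thm.main} then gives
\begin{equation*}
\frac{A}{D^2} \leq \frac{\pi}{2},
\end{equation*}
which is precisely the inequality asserted by Conjecture~\ref{Conj}.

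There is no real obstacle here, since all of the work has already been carried out in Theorem~\ref{Thm.main}; the assumption of the corollary is engineered exactly so that the boundary correction term disappears. It may be worth adding a short remark after the proof pointing out that this hypothesis, although restrictive, is satisfied by the round sphere (where the cut locus of every point is its unique antipode) and, as a natural degenerate limit, by the doubly-covered disc realising equality in Conjecture~\ref{Conj}.
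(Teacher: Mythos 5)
Your argument is correct and is exactly the deduction the paper intends: apply Theorem~\ref{Thm.main} at the point~$p$, noting that the conjugate-point hypothesis holds trivially for a singleton cut locus and that $|\Cut_p|=0$ while $\inj>0$, so the deviation term vanishes. Nothing is missing.
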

\noindent
As a large class of surfaces to which the result applies,
let us mention rotationally symmetric surfaces;
indeed, the cut locus of a pole
reduces to its antipodal point.

More significantly, the same situation happens
for umbilical points on any ellipsoid~\cite{Itoh-Kiyohara_2004} and
we thus have
\begin{Corollary}
Conjecture~\ref{Conj} holds true for ellipsoids.
\end{Corollary}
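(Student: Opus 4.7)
The plan is to deduce this corollary directly from Corollary~\ref{Cor.single} by exhibiting, on any ellipsoid, a point whose cut locus consists of a single point. The natural candidates, as the accompanying citation already signals, are the \emph{umbilical points} of the ellipsoid.

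First I would verify that any ellipsoid $\Sigma \subset \Real^{3}$ fits the framework of Corollary~\ref{Cor.single}: it is a smooth, connected, compact, orientable surface (being the boundary of a bounded smooth strictly convex body) whose Gauss curvature, computed from the standard quadric parametrization, is strictly positive everywhere. In particular $\Sigma$ is a closed oriented convex surface in the sense used in this paper. Next I would invoke the theorem of Itoh and Kiyohara~\cite{Itoh-Kiyohara_2004}, which asserts that on a triaxial ellipsoid (three pairwise distinct semiaxes) the cut locus of any of the four umbilical points reduces to a single point, namely the diametrically opposite umbilical point. Choosing $p$ to be such an umbilical point, Corollary~\ref{Cor.single} immediately yields the conjectured bound $A/D^{2} \leq \pi/2$.

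Finally I would dispose of the degenerate cases, which are not literally covered by the Itoh--Kiyohara statement. When two semiaxes coincide, $\Sigma$ is a surface of revolution and the poles of the rotation axis are umbilical; by rotational symmetry the cut locus of each pole reduces to the opposite pole, so Corollary~\ref{Cor.single} still applies (and, alternatively, the conjecture for surfaces of revolution is already established in~\cite{Makuha_1966,Abreu-Freitas_2002}). When all three semiaxes coincide, $\Sigma$ is a round sphere and the inequality is trivial, since in that case $A/D^{2}=4/\pi < \pi/2$. The only nontrivial ingredient throughout is the cut-locus structure theorem of Itoh and Kiyohara, which is taken as a black box; this is the step that constitutes the real obstacle, in the sense that without a precise description of $\Cut_p$ at an umbilical point of a generic ellipsoid there appears to be no elementary route to a single-point cut locus on such a surface.
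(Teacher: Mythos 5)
Your proposal is correct and follows exactly the paper's route: the corollary is deduced from Corollary~\ref{Cor.single} by taking $p$ to be an umbilical point, whose cut locus is a single point by the Itoh--Kiyohara theorem (with the rotationally symmetric and spherical cases handled by the antipodal-point observation already made in the paper). Your extra care with the degenerate (non-triaxial) cases is a welcome precision but does not change the argument.
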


This paper is organized as follows.
In the forthcoming Section~\ref{Sec.Pre} we summarize some
basic facts about the geometry of (not necessarily convex) surfaces which will be needed throughout the paper.
In particular, we recall the notion of geodesic polar coordinates
based on an arbitrary point of the surface
and present a formula for the total length of the cut locus of the point.
As we were unable to find a direct reference for this formula, 
we provide a proof in Appendix~\ref{Sec.App}.
The symmetrization procedure is performed in Section~\ref{Sec.proof},
where Theorem~\ref{Thm.main} is proved as a consequence
of a stronger and more general result (Theorem~\ref{Thm.strong}).
The latter does not require the hypothesis that
the conjugate cut points are countable.

In Appendix~\ref{Sec.Rot}, 
we prove a higher-dimensional analogue of Alexandrov's conjecture
for spherically symmetric manifolds in any dimension~$d \geq 2$ 
(Theorem~\ref{Thm.Rot}).
In this proof we only require that the spherically symmetric manifold 
is diffeomorphic to the sphere~$S^d$ and embedded in~$\Real^{d+1}$.
This situation includes convex spherically symmetric manifolds 
diffeomorphic to the sphere~$S^d$ due to the embeddability 
characterization of \cite[Thm.~2.1]{Rubinstein-Sinclair_2005}
combined with a classical result~\cite{Hong-Zuily_1995}
(in fact, even less restrictive, integral-type conditions
on the positivity of the curvature can be required~\cite{Engman_2004}).
Let us remark that surfaces considered in Theorems~\ref{Thm.main}
and~\ref{Thm.strong} are necessarily diffeomorphic to~$S^2$
and embeddable in~$\Real^3$. 

\section{Preliminaries}\label{Sec.Pre}
%
Let~$p$ be any point on~$\Sigma$.
A useful parameterization of~$\Sigma$
(regardless of the sign restriction of its curvature~$K$)
is given by the \emph{geodesic polar coordinates} 
\cite[Sec.~III.1]{Chavel-2nd}
based on~$p$, 
\ie~by the diffeomorphism
$$
  \Phi_p: U_p \to \Sigma\setminus\mathcal{C}_p:
  \big\{
  (t,\theta) \mapsto
  \exp_p\big( t\cos\theta \, e_1 + t \sin\theta \, e_2 \big)
  \big\}
  \,.
$$
Here~$\exp_p$ denotes the exponential map on~$T_p\Sigma$,
$\{e_1,e_2\}$ is an orthonormal basis of $T_p\Sigma$
and $U_p \subset T_p\Sigma$ is the star-shaped domain
$$
  U_p := \big\{ 
  (t\cos\theta \, e_1 + t \sin\theta \, e_2)
  \ \big| \ 0 < t < d_p(\theta) , \ \theta\in S^1
  \big\}
  \,.
$$
In these coordinates, 
the Riemannian metric of~$\Sigma$ becomes
\begin{equation}\label{metric}
  d\ell^2 = dt^2 + F_p(t,\theta)^2 \, d\theta^2 
  \,, \qquad
  (t,\theta) \in U_p
  \,.
\end{equation}
The function~$F_p^2$ admits a smooth non-negative square root~$F_p$
that plays the role of Jacobian of~$\Phi_p$
and satisfies the Jacobi equation
\begin{equation}\label{Jacobi}
\left\{
\begin{aligned}
  \partial_1^2 F_{p}(t,\theta) + K_p(t,\theta) \, F_p(t,\theta) &= 0
  && \forall (t,\theta) \in U_p
  \,, \\
  F_p(0,\theta) &= 0
  && \forall \theta \in S^1
  \,, \\
  \partial_1 F_{p}(0,\theta) &= 1
  && \forall \theta \in S^1
  \,.
\end{aligned}
\right.
\end{equation}
Here~$K_p$ is the Gauss curvature expressed 
in the geodesic polar coordinates based on~$p$.
Note that the family of geodesics~$\gamma_\theta$
mentioned in the introduction can be related 
to the exponential map via $\gamma_\theta(t)=\Phi_p(t,\theta)$.

Since the cut locus~$\Cut_p$ is a set of measure zero
with respect to the Riemannian measure of~$\Sigma$
(induced by the Euclidean Lebesgue measure on the tangent plane),
the geodesic polar coordinates represent a useful chart
for calculation of integrals.
In particular, for the surface area we have
\begin{equation}\label{area}
  A := \int_\Sigma d\Sigma =
  \int_{U_p} F_p(t,\theta) \, dt \, d\theta 
  \,,
\end{equation}
irrespectively of the choice of the point~$p$.

The diameter of~$\Sigma$, 
\begin{equation*}
  D:=\max_{p,q\in\Sigma} \dist(p,q)
  \,,
\end{equation*}
where $\dist(\cdot,\cdot)$ stands for the geodesic distance on~$\Sigma$,
is a less accessible quantity.
Using the geodesic polar coordinates, 
let us introduce
\begin{equation}\label{diameter0}
  D_p := \max_{\theta\in S^1} \, d_p(\theta)
  \,,
\end{equation}
\ie, the maximal distance of~$p$ to its cut locus measured
among all the geodesics~$\gamma_\theta$ emanating from~$p$.
Then we have
\begin{equation}\label{diameter}
  D = \max_{p \in \Sigma} \, D_p
  \,.
\end{equation}

Similarly, the injectivity radius of~$\Sigma$
can be defined through the formulae
\begin{equation}
  \inj := \min_{p \in \Sigma} \, \rho_p
  \,, \qquad \mbox{where} \qquad
  \rho_p := \min_{\theta\in S^1} \, d_p(\theta)
  \,.
\end{equation}
Note that $\inj>0$,
because the geodesic~$\gamma_\theta(t)$ is always minimizing
for sufficiently small~$t$ and~$\Sigma$ is compact.

By the Gauss-Bonnet theorem for closed surfaces,
one has the following identity 
for the \emph{total Gauss curvature}
\begin{equation}\label{GB}
  \int_\Sigma K \, d\Sigma
  = \int_{U_p} K_p(t,\theta) \, F_p(t,\theta) \, dt \, d\theta 
  = 2 \pi \, \chi_\Sigma
  \,,
\end{equation}
where~$\chi_\Sigma$ denotes the Euler characteristic of~$\Sigma$.
For orientable surfaces $\chi_\Sigma = 2 (1-g_\Sigma)$,
where~$g_\Sigma$ is the genus of~$\Sigma$.
In our case, when~$K$ is in addition non-negative and non-trivial, 
we necessarily have $g_\Sigma=0$ 
($\Sigma$~is diffeomorphic to the sphere~$S^2$)
and the total Gauss curvature thus equals~$4\pi$.

Since~$K$ is supposed to be non-negative,
it follows from~\eqref{Jacobi} that
\begin{equation}\label{comparison}
  F_p(t,\theta) \leq t 
\end{equation}
for every $(t,\theta) \in U_p$.  
In particular, $F_p$ is bounded on~$U_p$.
(For a general closed surface, regardless of the sign restriction of~$K$,
the particular bound \eqref{comparison} cannot be ensured,
but a similar bound with the right hand side being replaced
by $C t$ with a suitable constant~$C$ does hold,
and~$F_p$ is bounded on~$U_p$ in any case.)
Using in addition that~$F_p$ is uniformly continuous on~$U_p$,
we know that the function~$F_p$ admits unique boundary  
values on~$\partial U_p$ by the continuous extension.
Hence, $F_p(d_p(\theta),\theta)$ is well defined 
for every $\theta \in S^1$.
Let us also mention that $\theta \mapsto d_p(\theta)$
is a Lipschitz continuous function
\cite{Itoh-Tanaka_2000}.
As a consequence, we have the following formula for 
the total length of the cut locus
\begin{equation}\label{crucial}
  |\Cut_p| = 
  \frac{1}{2}
  \int_{S^1} 
  \sqrt{F_p\big(d_p(\theta),\theta\big)^2+d_p'(\theta)^2} 
  \, d\theta
\end{equation}
We were unable to find a direct reference for this formula, and thus
present a proof in Appendix~\ref{Sec.App}~--~\cf~Proposition~\ref{Prop.crucial}.
For this proof we need the hypothesis of Theorem~\ref{Thm.main}
about the structure of conjugate points in the cut locus.

Finally, we introduce the quantity 
\begin{equation}\label{meaning}
  M_p := 
  \int_{S^1} \frac{F_p\big(d_p(\theta),\theta\big)}{d_p(\theta)} 
  \, d\theta
  = \lim_{\eps \to 0} \frac{1}{\eps}
  \int_{S^1} 
  \int_{d_p(\theta)-\eps}^{d_p(\theta)}
  \frac{F_p(t,\theta)}{t} 
  \, dt \, d\theta
  \,.
\end{equation}
Note that the integrand is well defined because $d_p(\theta)>\inj>0$.
From~\eqref{comparison} we deduce a crude bound
\begin{equation}\label{pre.crude}
  M_p \leq 2\pi
  \,.
\end{equation}
Estimating~$d_p$ by the injectivity radius of~$\Sigma$,
we have
\begin{equation}\label{volume.crucial}
  M_p \leq \frac{1}{\rho} 
  \int_{S^1} F_p\big(d_p(\theta),\theta\big)
  \, d\theta
  = \frac{1}{\rho} \lim_{\eps \to 0} \frac{|\Omega_\eps|}{\eps}
  \,,
\end{equation}
where 
$$
  |\Omega_\eps|
  := \int_{S^1} 
  \int_{d_p(\theta)-\eps}^{d_p(\theta)}
  F_p(t,\theta)
  \, dt \, d\theta
$$ 
denotes the $2$-dimensional Riemannian measure
of an $\eps$-tubular neighbourhood of~$\Cut_p$
(note that~$\Omega_\eps$ differs from 
the $\eps$-tubular neighbourhood of~$\Cut_p$
defined by parallel curves!).
We are more interested in an estimate of the quantity~$M_p$
by means of the total length of~$\Cut_p$;
combining the inequality~\eqref{volume.crucial} with~\eqref{crucial},
we obtain
\begin{equation}\label{crude}
  M_p \leq \frac{2 \, |\Cut_p|}{\rho}
  \,.
\end{equation}
%

\section{Proof of Theorem~\ref{Thm.main}}\label{Sec.proof}
%
Our proof of Theorem~\ref{Thm.main} is based
on a symmetrization procedure. 
For every $s \in [0,D_p)$, we introduce
\begin{equation}\label{sym}
  f_p(s) :=
  \frac{1}{2\pi} \int_{S^1} \frac{D_p}{d_p(\theta)} \
  F_p\!\left(\frac{d_p(\theta)}{D_p} \, s,\theta\right) \, d\theta
  \,.
\end{equation}
Employing~\eqref{Jacobi},
it is easy to check that $f_p \in C^\infty((0,D_p))$ satisfies
\begin{equation}\label{Jacobi.bis}
\left\{
\begin{aligned}
  f_p''(s) + k_p(s) f_p(s) &= 0
  && \forall s \in (0,D_p)
  \,, \\
  f_p(0) &= 0
  \,, \\
  f_p'(0) &= 1
  \,,
\end{aligned}
\right.
\end{equation}
with
$$
  k_p(s) := \frac{\displaystyle
  \int_{S^1} \frac{d_p(\theta)}{D_p} \,
  K_p\!\left(\frac{d_p(\theta)}{D_p} s,\theta\right)
  F_p\!\left(\frac{d_p(\theta)}{D_p} s,\theta\right) \, d\theta
  }
  {\displaystyle
  \int_{S^1} \frac{D_p}{d_p(\theta)} \,
  F_p\!\left(\frac{d_p(\theta)}{D_p} s,\theta\right) 
  \, d\theta
  }
  \,.
$$
Hence, through the formula for the Riemannian metric 
on the cross-product manifold $(0,D_p) \times S^1$
$$
  d\ell^2 = ds^2 + f_p(s)^2 \, d\theta^2 
  \,, \qquad
  (s,\theta) \in (0,D_p) \times S^1
  \,,
$$
$f_p$~defines a rotationally symmetric surface~$\Sigma_p$ 
of the Gauss curvature~$k_p$.
The curvature~$k_p$ is non-negative if it is the case for~$K$.
The surface area of~$\Sigma_p$ reads
\begin{equation}\label{area.bis}
  A_p := 2\pi \int_0^{D_p} f_p(s) \, ds
  = 
  \int_{U_p}
  \frac{D_p^2}{d_p(\theta)^2} \,
  F_p(t,\theta) \, dt \, d\theta
  \,.
\end{equation}
At the same time, it is easily seen
that the symmetrization~\eqref{sym}
preserves the total Gauss curvature 
\begin{equation}\label{GB.bis}
  2\pi \int_0^{D_p} k_p(s) \, f_p(s) \, ds 
  = 
  \int_{U_p}
  K_p(t,\theta) \, F_p(t,\theta) \, dt \, d\theta
  = 4 \pi 
  \,,
\end{equation}
the last identity following from~\eqref{GB}.
On the other hand, it is absolutely necessary to stress
that (contrary to~$\Sigma$) $\Sigma_p$~may not be complete. 

Regardless of whether~$\Sigma_p$ is closed or not,
it can be represented as a surface of revolution 
embedded in~$\Real^3$. 
The embedding is explicitly provided by the mapping
$$
  \phi_p: (0,D_p) \times S^1 \to \Real^3:
  \left\{
  (s,\vartheta) \mapsto
  \big(r_p(s)\cos\theta,r_p(s)\sin\theta,z_p(s)\big)
  \right\}
  \,,
$$
where
$$
  r_p(s) := f_p(s)
  \,, \qquad
  z_p(s) := \int_0^s \sqrt{1-f_p'(\tau)^2} \, d\tau
  \,.
$$
By virtue of~\eqref{Jacobi.bis}, we have
\begin{equation}\label{fdot}
  f_p'(s) = 1 - \int_0^{s} k_p(\tau) \, f_p(\tau) \, d\tau 
  \,.
\end{equation}
Since~$k_p$ is non-negative, 
it follows with help of~\eqref{GB.bis} that 
$-1 \leq f_p' \leq 1$
(with the boundary values attained at~$D_p$ and~$0$, respectively).
Consequently, $z_p$~is well defined, non-decreasing, 
and~$\phi_p$ is thus indeed an embedding.
As mentioned above, however,
$\phi_p$ does not necessarily represent a closed surface in general.
The closedness depends on the value of $r_p(D_p)=f_p(D_p)$,
which might be different from zero.

As a matter of fact, the boundary of~$\phi_p$
is formed by a circle~$\Gamma_p$ of extrinsic radius~$f_p(D_p)$
(by extrinsic we mean that~$\Gamma_p$ is regarded
as a curve in~$\Real^3$).
Let us exclude for a moment the ``singular'' situation
$f_p(D_p)=0$ (in which case the surface~$\Sigma_p$ is closed).
By~\eqref{GB.bis} and the Gauss-Bonnet theorem 
for surfaces with boundary applied to~$\Sigma_p$, 
it then follows that 
\begin{equation}\label{geodesic}
  \oint_{\Gamma_p} \kappa_p \, dl = -2\pi
  \,,
\end{equation}
where~$\kappa_p$ denotes the geodesic curvature of~$\Gamma_p$ 
(as a curve on~$\Sigma_p \subset \Real^3$). 
Consequently, $\kappa_p = \pm f_p(D_p)^{-1}$
(with the sign depending on the parameterization of~$\Gamma_p$)
and the normal curvature of~$\Gamma_p$ is necessarily zero. 
At the same time, the length of the boundary circle equals
\begin{equation}\label{length}
  L_p := |\Gamma_p|
  = 2\pi \, f_p(D_p)
  = D_p \, M_p
  \,,
\end{equation}
where~$M_p$ was introduced in~\eqref{meaning}.
This formula remains trivially valid for $f_p(D_p)=0$.
We have the crude bound
\begin{equation}\label{length.crude}
  L_p \leq 2\pi \, D_p
  \,,
\end{equation}
which follows from~\eqref{pre.crude}.

By definition~\eqref{diameter}, $D_p \leq D$.
At the same time, comparing~\eqref{area} with~\eqref{area.bis}
and using definition~\eqref{diameter0},
we have $A_p \geq A$.
Consequently, we arrive at an intermediate bound
\begin{equation}\label{intermediate}
  \frac{A}{D^2} \leq 
  \frac{A_p}{D_p^2}
\end{equation}
valid for any $p \in \Sigma$.
The problem of Conjecture~\ref{Conj} is thus reduced 
to rotationally symmetric surfaces
(albeit possibly not complete).

To prove Theorem~\ref{Thm.main},
we divide~$A_p$ into two parts
$$
  A_p^{(1)}(\tau) := 2\pi \int_0^{\tau} f_p(s) \, ds
  \,, \qquad
  A_p^{(2)}(\tau) := 2\pi \int_{\tau}^{D_p} f_p(s) \, ds
  \,,
$$ 
with $\tau \in (0,D_p]$,
and use the following comparison arguments, respectively.
\begin{Lemma}
$
\displaystyle
  A_p^{(1)}(\tau) \leq \pi \tau^2
$.
\end{Lemma}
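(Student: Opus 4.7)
The plan is to reduce this to the pointwise inequality $f_p(s) \leq s$ on $[0, D_p)$ and then integrate. Once we have $f_p(s)\leq s$, we immediately get
\[
  A_p^{(1)}(\tau) = 2\pi \int_0^{\tau} f_p(s)\,ds \;\leq\; 2\pi \int_0^{\tau} s\,ds \;=\; \pi \tau^2,
\]
which is exactly the asserted bound.

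There are two natural routes to the pointwise inequality, both of which rely on the non-negativity of the curvature. The first route uses the Jacobi equation \eqref{Jacobi.bis} directly. Formula \eqref{fdot} combined with $k_p \geq 0$ (inherited from $K \geq 0$) and $f_p \geq 0$ (by construction, since $F_p \geq 0$) shows that $f_p'(s)$ is non-increasing and bounded above by $f_p'(0) = 1$. Integrating $f_p'\leq 1$ from $0$ to $s$ and using $f_p(0)=0$ yields $f_p(s)\leq s$. The second route applies the already-established Jacobi comparison \eqref{comparison}, namely $F_p(t,\theta)\leq t$, directly inside the definition \eqref{sym} of the symmetrized profile:
\[
  f_p(s) \;\leq\; \frac{1}{2\pi}\int_{S^1} \frac{D_p}{d_p(\theta)} \cdot \frac{d_p(\theta)}{D_p}\, s \, d\theta \;=\; s.
\]
Either argument gives the desired inequality on all of $[0, D_p)$, and by continuity at $s = \tau$ when $\tau = D_p$.

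There is no real obstacle here; both arguments are short and use only ingredients already assembled in Section~\ref{Sec.Pre} and at the start of Section~\ref{Sec.proof}. The only thing to be careful about is ensuring the sign hypotheses are in force ($K \geq 0$ implies $k_p \geq 0$ by the formula displayed just after \eqref{Jacobi.bis}), which is part of the standing convexity assumption on $\Sigma$. Geometrically the statement is intuitive: the symmetrized surface $\Sigma_p$ has non-negative curvature and unit initial growth of the warping function, so its area up to intrinsic radius $\tau$ cannot exceed the area $\pi\tau^2$ of the Euclidean disk of the same radius, which is the flat (zero-curvature) extremal.
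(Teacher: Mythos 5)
Your proof is correct and follows essentially the same route as the paper: establish the pointwise bound $f_p(s)\leq s$ from the non-negativity of $k_p$ via \eqref{Jacobi.bis}/\eqref{fdot}, then integrate. Your alternative derivation of $f_p(s)\leq s$ directly from \eqref{comparison} and the definition \eqref{sym} is an equally valid, equivalent shortcut.
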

\begin{proof}
Since~$k_p$ is non-negative, it follows from~\eqref{Jacobi.bis}
(or more directly from~\eqref{fdot})
that $f_p(s) \leq s$ for all $s \in [0,D_p]$.
Using this estimate in the integral defining $A_p^{(1)}(\tau)$,
we get the desired inequality.
\end{proof}
\begin{Lemma}
$
\displaystyle
  A_p^{(2)}(\tau) \leq \pi (D_p-\tau)^2 + L_p (D_p-\tau)
$.
\end{Lemma}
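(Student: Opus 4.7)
The plan is to bound $f_p(s)$ from above on the interval $[\tau,D_p]$ by an affine function of $s$ anchored at the right endpoint $D_p$, and then integrate.

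First I would observe that the Jacobi equation $f_p''= -k_p f_p$ together with $k_p\geq 0$ and $f_p\geq 0$ shows that $f_p'$ is non-increasing on $[0,D_p]$. Next, evaluating the identity
\[
  f_p'(s)=1-\int_0^{s} k_p(\tau) f_p(\tau)\,d\tau
\]
at $s=D_p$ and using the preserved total Gauss curvature \eqref{GB.bis}, which yields $\int_0^{D_p} k_p f_p\,ds=2$, gives $f_p'(D_p)=-1$. By the monotonicity of $f_p'$, we conclude $f_p'(s)\geq -1$ for every $s\in [0,D_p]$.

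Integrating this inequality from $s$ to $D_p$ produces the affine upper bound
\[
  f_p(s)\leq f_p(D_p)+(D_p-s),\qquad s\in[0,D_p].
\]
Inserting this into the definition of $A_p^{(2)}(\tau)$ and using the identity \eqref{length}, namely $2\pi f_p(D_p)=L_p$, I would compute
\[
  A_p^{(2)}(\tau)\leq 2\pi\int_{\tau}^{D_p}\bigl[f_p(D_p)+(D_p-s)\bigr]\,ds
  =L_p(D_p-\tau)+\pi(D_p-\tau)^2,
\]
which is the claimed inequality. The same computation is trivially valid in the degenerate case $f_p(D_p)=0$, where $\Sigma_p$ is closed and $L_p=0$.

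There is no real obstacle here: the argument is a one-line comparison once one recognizes that the Gauss--Bonnet normalization forces $f_p'(D_p)=-1$ exactly. The only point requiring a moment of care is that the monotonicity of $f_p'$ is essential, since a direct application of the mean value theorem would not pin down the sign of the slope; it is the combination of concavity of $f_p$ (from $k_p\geq 0$) with the endpoint value $f_p'(D_p)=-1$ that yields the pointwise bound $f_p'\geq -1$ used above.
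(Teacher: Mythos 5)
Your proof is correct, and it takes a genuinely different (and more elementary) route than the paper. The paper passes to geodesic parallel (Fermi) coordinates based on the boundary circle $\Gamma_p$, writes the metric as $dr^2+h_p(r)^2du^2$, uses the Jacobi equation~\eqref{Jacobi.parallel} with initial data $h_p(0)=1$, $h_p'(0)=-\kappa_p$ to get $h_p(r)\leq 1-\kappa_p r$, and then invokes the Gauss--Bonnet theorem for surfaces with boundary to identify $\kappa_p=-f_p(D_p)^{-1}$. You instead stay in the original polar coordinates centred at $p$ and exploit the concavity of $f_p$ (from $k_p\geq 0$) together with the endpoint value $f_p'(D_p)=-1$, which follows from the preserved total curvature~\eqref{GB.bis} --- a fact the paper itself records just after~\eqref{fdot} when it asserts $-1\leq f_p'\leq 1$ with the lower bound attained at $D_p$. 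The two arguments produce the identical pointwise estimate, since the Fermi Jacobian of the rotationally symmetric surface is $h_p(r)=f_p(D_p-r)/f_p(D_p)$ and the paper's bound $h_p(r)\leq 1+r/f_p(D_p)$ is exactly your $f_p(s)\leq f_p(D_p)+(D_p-s)$; but your version avoids setting up the parallel coordinate system and the boundary Gauss--Bonnet formula altogether, which is a genuine simplification. The only cosmetic blemish is the reuse of $\tau$ both as the dummy variable in~\eqref{fdot} and as the fixed splitting parameter of $A_p^{(2)}(\tau)$, a collision you inherit from the paper's own notation.
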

\begin{proof}
Here the idea is to use the \emph{geodesic parallel (Fermi) coordinates}
\cite[Sec.~III.6]{Chavel-2nd} based on~$\Gamma_p$ 
rather than the geodesic polar coordinates. 
In these coordinates,
the Riemannian metric of~$\Sigma_p$ becomes
$$
  d\ell^2 = dr^2 + h_p(r)^2 \, du^2 
  \,, \qquad
  (r,u) \in (0,D_p) \times \Gamma_p
  \,,
$$
where the Jacobian $h_p \in C^\infty((0,D_p))$ satisfies 
\begin{equation}\label{Jacobi.parallel}
\left\{
\begin{aligned}
  h_p''(r) + k_p(D_p-r) h_p(r) &= 0 
  && \forall r\in(0,D_p)
  \,, \\
  h_p(0) &= 1
  \,, \\
  h_p'(0) &= -\kappa_p
  \,.
\end{aligned}
\right.
\end{equation}
Here the coordinate~$r$
is measured from~$\Gamma_p$, rather than from~$p$,
and the minus sign in front of~$\kappa_p$ is required
due to the consistency with
the convexity of~$\Sigma_p$, \cf~\eqref{geodesic}.  
Consequently, 
\begin{equation}\label{area2}
  A_p^{(2)}(\tau) = \int_{\Gamma_p} \int_{0}^{D_p-\tau} 
  h_p(r) \, dr \, du
  \,.
\end{equation}
Since~$k_p$ is non-negative, it follows from~\eqref{Jacobi.parallel}
that $h_p(r) \leq 1-\kappa_p \, r$ for all $r \in [0,D_p]$.
Using this estimate in~\eqref{area2} 
and recalling~\eqref{geodesic},
we get the desired inequality.
\end{proof}

Putting these lemmata together, we get
$$
  A_p \leq \pi \tau^2 + \pi (D_p-\tau)^2 + L_p (D_p-\tau)
$$
for any $\tau \in (0,D_p]$.
The minimum of the right hand side as a function of~$\tau$
is achieved for 
$$
  \tau_\mathrm{min} := \frac{D_p}{2} + \frac{L_p}{4\pi}
  \,, 
$$
which is an admissible point from $(0,D_p]$
due to~\eqref{length.crude}.
Using this value and expressing~$L_p$ 
in terms of~$M_p$, \cf~\eqref{length},
we get the bound
\begin{equation}
  \frac{A_p}{D_p^2}
  \leq \frac{\pi}{2} + \frac{1}{2} \, M_p 
  \left(1- \frac{M_p}{4\pi}\right)
  .
\end{equation}
Plugging this estimate into~\eqref{intermediate},
we obtain the main result of this paper.
\begin{Theorem}\label{Thm.strong}
Let~$\Sigma$ be a closed convex surface.
For any point $p\in\Sigma$, we have
\begin{equation}
  \frac{A}{D^2} \leq \frac{\pi}{2}
  + \frac{1}{2} 
  \,
  M_p 
  \left(1- \frac{M_p}{4\pi}\right)
  \,.
\end{equation}
\end{Theorem}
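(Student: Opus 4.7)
The plan is to symmetrize $\Sigma$ about a chosen basepoint $p$ into a rotationally symmetric surface $\Sigma_p$, possibly with boundary, whose ratio $A_p/D_p^2$ dominates $A/D^2$, and then to bound $A_p/D_p^2$ by splitting the area of $\Sigma_p$ into a piece measured outward from the pole and a piece measured inward from the boundary circle $\Gamma_p$. The theorem will then emerge from optimizing the split point.

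For the symmetrization step, I would define the radial average $f_p$ as in~\eqref{sym} and verify, using~\eqref{Jacobi}, that $f_p$ satisfies the scalar Jacobi equation~\eqref{Jacobi.bis} with a non-negative averaged curvature $k_p$; this yields the rotationally symmetric surface $\Sigma_p$ whose area $A_p$ is given by~\eqref{area.bis}. A direct comparison with~\eqref{area}, using $d_p(\theta)\leq D_p$, delivers the key intermediate bound~\eqref{intermediate}. The symmetrization preserves the total Gauss curvature via~\eqref{GB.bis}, but $\Sigma_p$ need not be closed: when its boundary is non-trivial it is a circle $\Gamma_p$ whose length satisfies $L_p=D_p M_p$ by~\eqref{length}.

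For the area estimate, I would fix $\tau\in(0,D_p]$ and split $A_p=A_p^{(1)}(\tau)+A_p^{(2)}(\tau)$, where $A_p^{(1)}$ integrates $2\pi f_p$ from $0$ to $\tau$. The bound $A_p^{(1)}(\tau)\leq \pi\tau^2$ follows from $f_p(s)\leq s$, itself a consequence of~\eqref{fdot} and $k_p\geq 0$. For $A_p^{(2)}(\tau)$ I would switch to Fermi coordinates based on $\Gamma_p$; the corresponding Jacobian $h_p$ solves~\eqref{Jacobi.parallel} with $h_p'(0)=-\kappa_p$ fixed by~\eqref{geodesic}, and $k_p\geq 0$ gives the comparison $h_p(r)\leq 1-\kappa_p r$. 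Integrating this over the $\Gamma_p$-tubular strip of width $D_p-\tau$ produces $A_p^{(2)}(\tau)\leq \pi(D_p-\tau)^2+L_p(D_p-\tau)$.

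Summing the two estimates and minimizing the right-hand side in $\tau$ gives $\tau_{\min}=D_p/2+L_p/(4\pi)$, which lies in the admissible range thanks to the crude bound~\eqref{length.crude}. Substituting $\tau_{\min}$ and replacing $L_p$ by $D_p M_p$ via~\eqref{length} yields the stated inequality once one divides by $D_p^2$ and combines with~\eqref{intermediate}. The main delicate point I expect is the Fermi-coordinate step in the degenerate regime $f_p(D_p)=0$, where $\Sigma_p$ is already closed and $\kappa_p$ is ill-defined; in that case $L_p=0$, the two lemmas collapse into the trivial spherical-cap-type comparison, and the final formula continues to hold by a limiting argument, so the proof structure remains intact.
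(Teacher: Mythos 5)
Your proposal is correct and follows essentially the same route as the paper: the radial symmetrization \eqref{sym} leading to \eqref{intermediate}, the split of $A_p$ at $\tau$ with the two comparison lemmata (polar from $p$, Fermi from $\Gamma_p$), and the optimization at $\tau_{\min}=D_p/2+L_p/(4\pi)$ followed by the substitution $L_p=D_pM_p$. The degenerate case $f_p(D_p)=0$ is handled in the paper exactly as you anticipate, by noting that \eqref{length} remains trivially valid there.
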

Theorem~\ref{Thm.main} follows as a weaker version of this theorem 
by neglecting the non-positive term in the brackets
and applying the crude bound~\eqref{crude}. 

\appendix
\section{The total length of the cut locus}\label{Sec.App}
%
This appendix is devoted to a proof of formula~\eqref{crucial}
for a general (\ie~not necessarily convex) surface.
\begin{Proposition}\label{Prop.crucial}
Let~$\Sigma$ be closed surface, 
and let~$p$ be a point in~$\Sigma$.
Assume that the set of conjugate points 
in the cut locus~$\Cut_p$ is countable.
Then the total length of the cut locus 
is given by formula~\eqref{crucial}. 
\end{Proposition}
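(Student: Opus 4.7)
The plan is to realize the cut locus $\Cut_p$ as the image of the Lipschitz map
\begin{equation*}
  \tilde\gamma: S^1 \to \Sigma, \qquad \tilde\gamma(\theta) := \Phi_p\big(d_p(\theta),\theta\big),
\end{equation*}
and to convert the $\theta$-integral in~\eqref{crucial} into an integral over $\Cut_p$ against $\mathcal{H}^1$ via the area formula for Lipschitz maps. The factor $1/2$ should emerge naturally because $\tilde\gamma$ is generically $2$-to-$1$.

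First I would verify that $\tilde\gamma$ is Lipschitz: $d_p$ is Lipschitz by~\cite{Itoh-Tanaka_2000}, $F_p$ extends continuously to $\partial U_p$ as recalled in Section~\ref{Sec.Pre}, and the exponential map is smooth. Differentiating and writing the tangent vector in the orthogonal coordinate frame $\{\partial_t,\partial_\theta\}$, whose metric lengths are $1$ and $F_p(d_p(\theta),\theta)$ respectively, yields
\begin{equation*}
  |\tilde\gamma'(\theta)| = \sqrt{F_p\big(d_p(\theta),\theta\big)^2 + d_p'(\theta)^2}
\end{equation*}
for a.e.\ $\theta$. The inclusion $\tilde\gamma(S^1) \subseteq \Cut_p$ is immediate from the definition of $d_p$, while the reverse inclusion follows because on a compact surface each $q \in \Cut_p$ is the endpoint of at least one minimizing geodesic emanating from~$p$ (Hopf--Rinow).

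The crux is the multiplicity claim: $\#\tilde\gamma^{-1}(q) = 2$ for $\mathcal{H}^1$-a.e.\ $q \in \Cut_p$. A non-conjugate cut point is necessarily reached by at least two distinct minimizing geodesics from~$p$ (otherwise the unique geodesic could be extended as minimizing, contradicting its cut-point status). The non-conjugate cut points at which three or more minimizing geodesics meet are the branch vertices of the tree-like structure of the cut locus on a surface and form an at most countable set, a classical fact present in~\cite{Hebda_1994,Itoh_1996}. Together with the standing hypothesis that conjugate cut points are also countable, this leaves at most a countable --- hence $\mathcal{H}^1$-null --- subset of $\Cut_p$ off which $\tilde\gamma$ is exactly $2$-to-$1$.

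Finally, applying the area formula to the Lipschitz map $\tilde\gamma: S^1 \to \Sigma$,
\begin{equation*}
  \int_{S^1} \sqrt{F_p\big(d_p(\theta),\theta\big)^2 + d_p'(\theta)^2}\, d\theta = \int_{\Cut_p} \#\tilde\gamma^{-1}(q)\, d\mathcal{H}^1(q) = 2\,|\Cut_p|,
\end{equation*}
and rearranging yields~\eqref{crucial}. The main obstacle is the multiplicity analysis: proving the $\mathcal{H}^1$-a.e.\ $2$-to-$1$ property is what requires combining the countability hypothesis on conjugate points with the surface-specific structure theory for the branching behaviour of $\Cut_p$; everything else in the argument is essentially a computation in geodesic polar coordinates.
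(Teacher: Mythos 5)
Your argument is correct, but it takes a genuinely different route from the paper's. The paper partitions the parameter circle $S^1$ into the preimages (under the same map, there called $w$) of the cleave points, the branch points, and the conjugate cut points, and treats each piece by hand: the cleave part is a countable union of pairs of intervals mapped onto the same smooth arc (whence the factor $1/2$), the branch-point preimage has Lebesgue measure zero, and on the conjugate-point preimage $E_0$ --- which can have \emph{positive} Lebesgue measure in $S^1$, e.g.\ all of $S^1$ for the round sphere --- the integrand itself is shown to vanish a.e., using $F_p(d_p(\theta),\theta)=0$ together with the constancy of $d_p$ on each of the countably many fibres $w^{-1}(q_n)$; this last step is exactly where the countability hypothesis enters. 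You instead invoke the area formula for Lipschitz maps once and push all the measure theory onto the multiplicity function $N(q)=\#\tilde\gamma^{-1}(q)$ in the \emph{target}: $N\equiv 2$ on cleave points and the exceptional set is $\mathcal{H}^1$-null. Two remarks. First, your route is actually stronger than you claim: you do not need the countability of the conjugate cut points at all, since the set of \emph{all} non-cleave points (conjugate points included) has $\mathcal{H}^1$-measure zero by the Hebda--Itoh structure theory already quoted in Appendix~\ref{Sec.App}, and branch points are always countable; so the hypothesis is superfluous for your argument. Second, the one place deserving an explicit sentence is the conjugate-point set: a single conjugate cut point may have a whole interval of preimages, so $N(q)=\infty$ there and $\tilde\gamma^{-1}(E_0)$ may have positive length in $S^1$; the area formula still yields $\int_{E_0}|\tilde\gamma'|\,d\theta=\int_{\tilde\gamma(E_0)}\#\bigl(E_0\cap\tilde\gamma^{-1}(q)\bigr)\,d\mathcal{H}^1(q)=0$ because $\tilde\gamma(E_0)$ is $\mathcal{H}^1$-null, but this is precisely the delicate point that the paper's hypothesis is designed to circumvent, so you should spell it out. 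In short, the paper's approach buys elementariness (only the arclength formula for a Lipschitz curve over an interval), while yours buys a cleaner and more general statement at the price of invoking Federer's area formula for Lipschitz curves with values in a Riemannian manifold.
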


First we recall some definitions and facts from~\cite{Hebda_1994},
to where we refer for more information.
Fix $p \in \Sigma$.
A cut point $q \in \Cut_p$ is said to be a \emph{conjugate cut point}
if it is conjugate to~$p$ along at least one minimizing
geodesic~$\gamma_\theta$ joining~$p$ to~$q$
(\ie~$F_p(t,\theta)=0$ for $t\not=0$, $\theta \in S^1$,
corresponding to~$q$),
and is said to be a \emph{non-conjugate cut point} otherwise.
The \emph{order} of a non-conjugate cut point $q \in \Cut_p$
is the number of minimizing geodesics joining~$p$ to~$q$.
The order is always finite and at least two.
A cut point is said to be a \emph{cleave point} 
if it is a non-conjugate cut of order two;
it is a \emph{non-cleave point} otherwise.

The cut locus~$\Cut_p$ is a closed, compact, 
connected and non-empty subset of~$\Sigma$.
The set of cleave points is a relatively open subset of~$\Cut_p$
forming a smooth $1$-dimensional submanifold of~$\Sigma$.
The set of non-cleave points in~$\Cut_p$ is a closed subset
whose $1$-dimensional Hausdorff measure is zero
(in fact, the Hausdorff dimension is~$0$).
It is also known that at a cleave point~$q$,
the two minimizing geodesics joining~$p$ to~$q$
make the same angle with the tangent plane at~$q$
to the submanifold of cleave points, but from opposite sides.

Moreover, by a theorem of Myers \cite[Thm.~2.3]{Hebda_1994},
the topological structure of~$\Cut_p$ is that of a local tree.
To recall the notion, two other topological definitions are needed.
An \emph{arc} is a topological space homeomorphic 
to the unit interval~$[0,1]$.
A \emph{tree} is is a topological space with the property
that every pair of points $q_1,q_2$
is contained in a unique arc with endpoints $q_1,q_2$.
A \emph{local tree} is a topological space
in which every point is contained in arbitrarily small
closed neighbourhoods which are themselves trees.
For simply connected surfaces, $\Cut_p$~is a tree.
(Convex oriented surfaces considered in this paper
are simply connected due to the Gauss-Bonnet theorem~\eqref{GB}
and the text below it; for general convex surfaces 
the tree property is also mentioned in~\cite{Zamfirescu_1998}.)

Each point of a tree is either an \emph{endpoint},
\emph{ordinary point} or \emph{branch point} 
depending upon the number (one, two, or more respectively)
of connected components possessed by a deleted neighbourhood of the point.
Every endpoint is a conjugate cut point;
every cleave point is ordinary;
and every non-conjugate, non-cleave point is a branch point.
There are at most countably many branch points in the cut locus~$\Cut_p$,
but the set of endpoints may be uncountable.
For real-analytic manifolds the endpoints are at most finite
\cite{Poincare_1905},
but there exist examples of smooth surfaces 
with an infinite number of endpoints (non-triangulable cut loci)
\cite{Gluck-Singer_1978}.
We refer to~\cite{Sinclair-Tanaka_2006} for an interesting numerical study 
of the structure of cut loci of rotationally symmetric surfaces.

\begin{proof}[Proof of Proposition~\ref{Prop.crucial}]
The map $w:S^1 \to \Cut_p$ 
defined by $w(\theta) := \Phi_p(d_p(\theta),\theta)$
is a continuous proper map (\ie~the inverse image of compact sets is compact).
Let 
\begin{align*}
  E &:= 
  \{
  \theta \in S^1 \,|\, w(\theta) 
  \mbox{ is a non-cleave point of } \Cut_p
  \}
  \,,
  \\
  E_0 &:= 
  \{
  \theta \in E \,|\, w(\theta) 
  \mbox{ is a conjugate cut point of } \Cut_p
  \}
  \,,
  \\ 
  E_1 &:= 
  \{
  \theta \in E \,|\, w(\theta) 
  \mbox{ is a non-conjugate cut point of } \Cut_p
  \}
  \,.
\end{align*}
We have $\Cut_p = w(S^1)$
and we know that the $1$-dimensional Hausdorff measure 
of the image $w(E)$ is zero
and that $V := \Cut_p \setminus w(E)$ is the union
of at most countably many smooth connected $1$-dimensional manifolds~$V_n$ 
of cleave points.
Consequently, $|\Cut_p| = |V|$.
\smallskip \\
\textbf{$\bullet$ Cleave points.}
Since the Hausdorff $1$-measure of a smooth curve is its arclength,
the standard calculus formula, 
employing the fact that $d_p$~is Lipschitz continuous~\cite{Itoh-Tanaka_2000}
(in fact, the absolute continuity established in 
\cite{Hebda_1994,Itoh_1996} would be sufficient),
gives
$$
  |V_n| = \int_{I_n^{(1)}} 
  \sqrt{F_p\big(d_p(\theta),\theta\big)^2+d_p'(\theta)^2} 
  \, d\theta
  \,.
$$
Here $I_n^{(1)} \subset S^1$ is one interval from the disjoint union 
$w^{-1}(V_n) = I_n^{(1)} \cup I_n^{(2)}$;
$w^{-1}(V_n)$ has exactly \emph{two} connected components 
because~$V_n$ is composed of cleave points.
Taking this multiplicity into account, 
we conclude with
$$
  |V| = \frac{1}{2}
  \int_{S^1 \setminus E} 
  \sqrt{F_p\big(d_p(\theta),\theta\big)^2+d_p'(\theta)^2} 
  \, d\theta
  \,.
$$
\smallskip \\
\textbf{$\bullet$ Non-conjugate non-cleave points.}
As recalled above, 
every non-conjugate non-cleave point is a branch point
and there are at most countably many branch points 
$\{q_n\}_{n=1}^N \subset \Cut_p$, with $N \in \{1,\dots,\infty\}$.
Moreover, since the order~$m_n$ of each $q_n$ is finite,
it follows that the preimage $w^{-1}(q_n)$ is composed
of $m_n$ distinct angles 
$\theta_{n}^{(1)}<\theta_{n}^{(2)}<\dots<\theta_{n}^{(m_n)}$.
Consequently, using in addition that the cut locus~$\Cut_p$ is compact,
the Lebesgue $1$-measure of
$
  E_1 = \cup_{n=1}^\infty w^{-1}(q_n)
$
is equal to zero and we have
$$
  \int_{E_1} 
  \sqrt{F_p\big(d_p(\theta),\theta\big)^2+d_p'(\theta)^2} 
  \, d\theta
  = 0
  \,.
$$
\smallskip \\
\textbf{$\bullet$ Conjugate cut points.}
For a general closed (even convex) surface 
the set of conjugate cut points can be uncountable, 
although it must be a totally disconnected set
with Hausdorff dimension being zero
\cite[Rem.~2.4]{Hebda_1994}.
Under our assumption, however, 
there are at most countably many conjugate cut points
$\{q_n\}_{n=1}^N \subset \Cut_p$, with $N \in \{1,\dots,\infty\}$.
The preimage~$E_0$ does not necessarily have the Lebesgue $1$-measure
equal to zero, however, by definition of conjugate cut points, we have
$$
  \forall \theta \in w^{-1}(q_n)
  \,, \qquad
  F_p\big(d_p(\theta),\theta\big) = 0 
  \quad\mbox{and}\quad
  d_p(\theta) = c_n
  \,,
$$
where~$c_n$ is a constant.
Consequently,
\begin{multline*}
  \int_{E_0} 
  \sqrt{F_p\big(d_p(\theta),\theta\big)^2+d_p'(\theta)^2} 
  \, d\theta
  \\
  = \sum_{n=1}^N
  \int_{w^{-1}(q_n)} 
  \sqrt{F_p\big(d_p(\theta),\theta\big)^2+d_p'(\theta)^2} 
  \, d\theta
  = 0
  \,.
\end{multline*}

\smallskip 
Summing up, the total length of the cut locus~$\Cut_p$
is determined by~$|V|$, which coincides with the formula~\eqref{crucial}
because the integration over~$E$ does not contribute. 
\end{proof}
%

\section{Alexandrov's conjecture in higher dimensions}\label{Sec.Rot}
%
Let~$\Sigma$ be the sphere~$S^d$ with a \emph{spherically symmetric}
Riemannian metric, that is, such that there exists a point $p \in \Sigma$
for which the metric in the geodesic spherical coordinates about $p$ reads
\begin{equation}\label{polar.any}
  d\ell^2 = ds^2 + f_p(s)^2 d\theta^2
  \,, 
  \qquad
  (s,\theta) \in U_p := (0,D_p) \times S^{d-1}
  \,.
\end{equation}
Here~$d\theta^2$ is the standard Euclidean metric 
on the unit sphere~$S^{d-1}$, 
$D_p$~is a positive number 
and $f_p \in C^\infty((0,D_p))$ is a positive function 
satisfying the Jacobi equation~\eqref{Jacobi.bis}, 
where~$k_p(s)$ should be interpreted
as the \emph{radial curvature} of $\Sigma$ at~$q$ 
for any~$q$ such that $\dist(p,q)=s$.
By the radial curvature we mean the restriction of 
the sectional curvature functions 
to all the planes containing the unit vector field~$W$ such that,
for any point $q\in\Sigma\setminus(\{p\}\cup\Cut_p$),
$W(q)$~is the unit vector tangent to the unique geodesic joining~$p$ to~$q$
(this notion is standardly defined 
for manifolds with a pole~\cite{GrWu},
but it clearly extends as given here 
to an arbitrary manifold when parameterized 
in the geodesic spherical coordinates).

Because of the spherical symmetry~\eqref{polar.any},
the geodesics starting from~$p$ have the same behaviour in all directions,
implying that if one geodesic is minimising in one direction up to time~$t$, 
the same must happen to all other geodesics emanating from~$p$. 
Thus the exponential map depends only on
the radial variable and we conclude that while it remains a diffeomorphism,
the boundary of its image for each such (positive) $t$ must be a $(d-1)$-sphere.
At a point for which the exponential map stops being a diffeomorphism,
the same must happen at all other points at the same distance from~$p$ and thus the cut
locus is either a $(d-1)$-sphere or a point. 
Since the closure of $\Sigma\setminus \Cut_p$ is $\Sigma$, 
which is a closed manifold,
we conclude that the cut locus of~$p$ reduces to the single point 
$p^* := \exp_p(\{D_p\} \times S^{d-1}) \in \Sigma$, 
\ie
\begin{equation}\label{polar.cut}
  \Cut_p=\{p^*\}
  \,.
\end{equation}
We note that the particular two-dimensional case of~\eqref{polar.cut} is
covered in~\cite[Lem.~2.1]{Sinclair-Tanaka_2007}.

Since $\Sigma$ is diffeomorphic to~$S^d$,
it follows from~\eqref{polar.any} and~\eqref{polar.cut} that
we may respectively identify~$p$ and~$p^*$ 
with the South and North Poles of~$S^{d}$. 
We have $\Cut_{p^*}=\{p\}$
and formulae analogous to~\eqref{polar.any} and~\eqref{Jacobi.bis} 
hold for~$p^*$.

We say that~$\Sigma$ is \emph{convex} if all the sectional curvatures
at all the points on the manifold~$\Sigma$ are non-negative.
In this case, it follows from~\eqref{Jacobi.bis} that
\begin{equation}\label{comparison.any}
  f_p(s) \leq s
  \qquad \mbox{and} \qquad
  f_{p^*}(s) \leq s
\end{equation}
for every $s \in (0,D_p)$.

We note that convexity is not necessary to obtain the above
estimates. In fact, both inequalities in~\eqref{comparison.any}~follow from 
$|f_p'(s)| \leq 1$ for every $s \in [0,D_p]$,
which is equivalent to the embeddability of~$\Sigma$ to~$\Real^{d+1}$, 
which in turn can be ensured by assuming mere positivity of integrals
of the radial curvature over polar caps of~$S^d$
(which automatically holds under the convexity assumption);
\cf~\cite[Thm.~2.1]{Rubinstein-Sinclair_2005} and~\cite{Engman_2004}.
Let us therefore assume only that~$\Sigma$ is a spherically
symmetric manifold (in the sense of the definition given above)
that is isometrically embedded in~$\Real^{d+1}$.
The manifold~$\Sigma$ can be obtained by rotating a curve 
around the~$x^1$ axis in~$\Real^{d+1}$
by the action of $SO(d)$.

Applying the comparison estimates~\eqref{comparison.any}
to the Riemannian volume of~$\Sigma$,
that we denote again by~$A$, we arrive at
\begin{align}\label{surface.any}
  A 
  &= |S^{d-1}| \int_0^{D_p} f_p^{d-1}(s) \, ds
  = |S^{d-1}| \int_0^{D_{p}} f_{p^*}^{d-1}(s) \, ds
  \nonumber \\
  &= |S^{d-1}| \int_0^{D_p/2} f_p^{d-1}(s) \, ds 
  + |S^{d-1}| \int_0^{D_{p}/2} f_{p^*}^{d-1}(s) \, ds
  \nonumber \\
  &\leq \frac{|S^{d-1}|}{2^{d-1} d} \, D_p^d
  = \frac{|B^{d}|}{2^{d-1}} \, D_p^d
  \,.
\end{align}
Here~$|S^{d-1}|$ denotes the $(d-1)$-dimensional volume 
of the $(d-1)$-dimensional sphere~$S^{d-1}$
and~$|B^{d}|$ denotes the $d$-dimensional volume 
of the $d$-dimensional unit ball~$B^{d}$. 

Since $\Sigma$ is spherically symmetric, 
its diameter~$D$ equals the distance between the poles~$p$ and $p^*$.
To see this, it is enough to consider that for any two points $a$ and $b$
in $\Sigma$, we have
\[
\begin{array}{ccl}
  \dist(a,b) 
  & \leq & 
  \min\left\{\dist(a,p)+\dist(p,b),\dist(a,p^{*})+\dist(p^{*},b) \right\}
  \eqskip
  & \leq & 
  \frac{1}{2} \big[ \dist(a,p)+\dist(p,b)+\dist(a,p^{*})+\dist(p^{*},b) \big]
  \eqskip
  & = & 
  \dist(p,p^{*})
  \,,
\end{array}
\]
where the equality uses the symmetry assumption
through the minimizing property of geodesics joining~$p$ with~$p^*$, 
namely, 
$$
  \dist(a,p) + \dist(a,p^{*}) 
  = \dist(p,p^{*}) =
  \dist(p,b) + \dist(p^{*},b) 
  \,.
$$

We have thus proven the following result.
\begin{Theorem}\label{Thm.Rot}
Let~$\Sigma$ be a closed spherically symmetric 
Riemannian manifold of dimension~$d$ 
diffeomorphic to~$S^d$
that is isometrically embedded in~$\Real^{d+1}$. 
Then
\begin{equation}\label{quotient.any}
  \frac{A}{D^d} \leq \frac{|B^{d}|}{2^{d-1}}
  \,.
\end{equation}
\end{Theorem}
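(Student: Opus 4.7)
The plan is to exploit spherical symmetry to collapse the problem to a one-dimensional comparison, and then to extract the factor $2^{d-1}$ by splitting the volume symmetrically about the two poles. The embedding hypothesis will enter only through the Jacobian bound $|f_p'(s)| \leq 1$.

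First I would show that the cut locus $\Cut_p$ reduces to a single antipodal point $p^*$. Because the metric~\eqref{polar.any} depends only on the radial variable, the exponential map based at $p$ is radial, so all geodesics $\gamma_\theta$ cease to be minimizing at exactly the same distance $D_p$. Since $\Sigma$ is a closed manifold and the closure of $\Sigma \setminus \Cut_p$ must equal $\Sigma$, the set $\exp_p(\{D_p\} \times S^{d-1})$ cannot be a $(d{-}1)$-sphere and must collapse to a single point $p^*$. The same argument applied at $p^*$ yields a second polar chart with profile function $f_{p^*}$ satisfying an analogous Jacobi equation, and by symmetry $D_{p^*} = D_p$.

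Next I would prove $D = D_p$. For arbitrary $a,b \in \Sigma$, spherical symmetry forces the unique minimizing geodesic from $p$ to $p^*$ passing through $a$ to realize $\dist(a,p) + \dist(a,p^*) = D_p$, and similarly for $b$. Averaging the two triangle inequalities
\begin{equation*}
  \dist(a,b) \leq \dist(a,p) + \dist(p,b),
  \qquad
  \dist(a,b) \leq \dist(a,p^*) + \dist(p^*,b)
\end{equation*}
then yields $\dist(a,b) \leq D_p$, so $D = D_p$. I would then extract the pointwise bounds $f_p(s) \leq s$ and $f_{p^*}(s) \leq s$: the isometric embedding of $\Sigma$ into $\Real^{d+1}$ as a hypersurface of revolution requires the height function $z_p(s) = \int_0^s \sqrt{1 - f_p'(\tau)^2}\, d\tau$ to be well defined, so $|f_p'| \leq 1$ everywhere, and combined with $f_p(0) = 0$ this integrates to the comparison (the paper's reference to~\cite{Rubinstein-Sinclair_2005} confirms this is equivalent to embeddability).

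Finally I would split the volume integral at $s = D_p/2$, using the polar chart about $p$ on the hemisphere $\{\dist(\cdot,p) \leq D_p/2\}$ and the polar chart about $p^*$ on the complementary hemisphere $\{\dist(\cdot,p^*) \leq D_p/2\}$, which together cover $\Sigma$ up to a measure-zero sphere thanks to $D_p = D_{p^*}$. This gives
\begin{equation*}
  A = |S^{d-1}| \int_0^{D_p/2} f_p^{d-1}(s)\, ds
  + |S^{d-1}| \int_0^{D_p/2} f_{p^*}^{d-1}(s)\, ds,
\end{equation*}
and inserting $f_p(s), f_{p^*}(s) \leq s$ followed by elementary integration yields $A \leq |S^{d-1}| D_p^d / (2^{d-1} d) = |B^d| D^d / 2^{d-1}$, as required. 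The main obstacle I anticipate is the symmetric covering argument in this last step: one must verify that the two half-charts genuinely partition $\Sigma$ up to a null set and that the boundary $\{\dist(\cdot,p) = D_p/2\}$ is a single totally geodesic equator, both of which follow from the collapse $\Cut_p = \{p^*\}$ and $D_{p^*} = D_p$, but need to be recorded carefully.
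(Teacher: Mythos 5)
Your proposal is correct and follows essentially the same route as the paper: collapse of the cut locus to a single antipodal point by spherical symmetry, the identity $D=\dist(p,p^*)=D_p$ via the averaged triangle inequalities, the Jacobian bound $f_p(s)\leq s$ from $|f_p'|\leq 1$ (embeddability), and the symmetric splitting of the volume integral at $s=D_p/2$ between the two polar charts. No substantive differences to report.
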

\noindent
Again, it can be verified that the value on the right hand side
of this inequality is attained for the respective quotient
by the degenerate convex surface formed by
gluing two balls of diameter~$D$ along their boundaries.

It is thus natural to propose the following extension of Conjecture~\ref{Conj} to any dimension.
\begin{Conjecture}\label{Conj.any}
Inequality~\eqref{quotient.any} holds for any closed oriented
convex Riemannian manifold of dimension~$d$.
\end{Conjecture}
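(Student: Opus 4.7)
The plan is to extend the symmetrization procedure of Section~\ref{Sec.proof} to arbitrary dimension $d \geq 2$. Fix any point $p \in \Sigma$ and use geodesic polar coordinates based on~$p$, in which the Riemannian metric takes the form $d\ell^2 = dt^2 + g_p(t,\theta)$ with $g_p(t,\cdot)$ a $t$-dependent Riemannian metric on $S^{d-1}$. Writing $J_p(t,\theta) := \sqrt{\det g_p(t,\theta)}$ for the volume Jacobian of $\exp_p$ along the geodesic $\gamma_\theta$, generalise~\eqref{sym} by
\begin{equation*}
  f_p(s)^{d-1} := \frac{1}{|S^{d-1}|}
  \int_{S^{d-1}} \left(\frac{D_p}{d_p(\theta)}\right)^{d-1}
  J_p\!\left(\frac{d_p(\theta)}{D_p}\,s,\,\theta\right) d\theta
  \,, \qquad s \in [0,D_p)
  \,.
\end{equation*}
The change of variables $t = d_p(\theta)\,s/D_p$ together with $d_p(\theta) \leq D_p$ shows that the rotationally symmetric volume $A_p := |S^{d-1}|\int_0^{D_p} f_p(s)^{d-1}\,ds$ dominates the original volume $A$, whence $A/D^d \leq A_p/D_p^d$; it thus suffices to bound the right-hand side by $|B^d|/2^{d-1}$ on the (possibly incomplete) rotationally symmetric $d$-manifold carried by $f_p$.

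Split $A_p = A_p^{(1)}(\tau) + A_p^{(2)}(\tau)$ at some $\tau \in (0,D_p]$ as in Section~\ref{Sec.proof}. For the inner part, the Bishop volume comparison theorem applies: since the sectional curvature of~$\Sigma$ is non-negative, so is its Ricci curvature, and therefore $J_p(t,\theta) \leq t^{d-1}$; plugging this into the definition of $f_p$ yields $f_p(s) \leq s$, whence $A_p^{(1)}(\tau) \leq |B^d|\,\tau^d$. For the outer part, switch to geodesic Fermi coordinates relative to the boundary $(d{-}1)$-sphere $\Gamma_p$ of extrinsic radius $f_p(D_p)$, mimicking the treatment in the second lemma of Section~\ref{Sec.proof}. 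The shape operator $S_p(r)$ of the parallel hypersurface at Fermi distance $r$ from $\Gamma_p$ satisfies the matrix Riccati equation $S_p' + S_p^2 + R_p = 0$, where $R_p$ is the radial curvature operator; non-negativity of the sectional curvatures combined with a Heintze--Karcher-type integral comparison is expected to yield an upper bound on the Fermi Jacobian $h_p(r)$ depending only on $f_p(D_p)$ and~$r$, ultimately producing an estimate of the form
\begin{equation*}
  A_p^{(2)}(\tau) \leq |B^d|(D_p - \tau)^d + L_p\,(D_p - \tau)^{d-1}
  \,,
\end{equation*}
with $L_p := |S^{d-1}|\,f_p(D_p)^{d-1}$ the $(d{-}1)$-volume of $\Gamma_p$, reducing to the corresponding lemma when $d=2$.

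The main obstacle is producing the boundary datum for $S_p(0)$. In dimension two this follows from a one-line application of the Gauss--Bonnet theorem with boundary to the synthesized surface $\Sigma_p$: the total Gauss curvature~$4\pi$, see~\eqref{GB.bis}, is transferred to the boundary through $\oint_{\Gamma_p}\kappa_p\,dl = -2\pi$, pinning down $\kappa_p = -1/f_p(D_p)$. For $d \geq 3$ the analogous global identity is the Chern--Gauss--Bonnet theorem with boundary, which couples the Pfaffian of the curvature two-form inside $\Sigma_p$ with a transgression form on $\Gamma_p$ built from the full second fundamental form, and does not translate directly into a pointwise or trace-level bound on $S_p(0)$. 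One must instead exploit the rotational symmetry of the synthesized $\Sigma_p$ to argue that $\Gamma_p$ is totally umbilical with all principal curvatures equal to $-1/f_p(D_p)$, reducing the matrix Riccati equation to its scalar radial trace. Even granting this step, optimisation of the combined bound in $\tau$ will yield $A_p/D_p^d \leq |B^d|/2^{d-1}$ plus a non-negative correction vanishing precisely when $f_p(D_p) = 0$, that is, when $\Cut_p$ reduces to a single point; this recovers Theorem~\ref{Thm.Rot} and the higher-dimensional analogue of Corollary~\ref{Cor.single}, but a proof of Conjecture~\ref{Conj.any} in full generality will resist this strategy for the same structural reason as Conjecture~\ref{Conj}: sharply bounding the correction term appears to require control over the $(d{-}1)$-dimensional Hausdorff measure of the cut locus that is not currently available.
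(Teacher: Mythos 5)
The statement you are asked about is not a theorem of the paper: Conjecture~\ref{Conj.any} is proposed as an open problem, and the paper proves only the spherically symmetric case (Theorem~\ref{Thm.Rot}), by a short direct argument -- spherical symmetry forces $\Cut_p=\{p^*\}$, the volume is split at $s=D_p/2$ and estimated from both poles via $f_p(s)\leq s$, $f_{p^*}(s)\leq s$ (consequences of embeddability in $\Real^{d+1}$), and the diameter is identified with $\dist(p,p^*)$. Your text, by your own admission in its last sentence, also does not prove the conjecture; it is a programme for a higher-dimensional analogue of Theorem~\ref{Thm.strong}, which even in $d=2$ yields only the cut-locus--corrected bound and not Conjecture~\ref{Conj} itself. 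So as a proof of the stated claim it has a gap by construction, independently of the technical issues below.

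The concrete places where the sketch would fail are the ones you partly flag. First, the decisive $2$-dimensional input is the pair \eqref{GB.bis} and \eqref{geodesic}: preservation of the total curvature $4\pi$ under the symmetrization gives, via \eqref{fdot}, the bound $-1\leq f_p'\leq 1$, and Gauss--Bonnet with boundary pins down the geodesic curvature of $\Gamma_p$ as $-1/f_p(D_p)$, which is exactly the initial datum needed in \eqref{Jacobi.parallel}. In $d\geq 3$ there is no substitute: rotational symmetry of the synthesized manifold only makes $\Gamma_p$ umbilical with principal curvatures $-f_p'(D_p)/f_p(D_p)$, and nothing in your construction controls $f_p'(D_p)$ (no analogue of $|f_p'|\leq 1$ is available, since averaging $(d-1)$-st powers of the Jacobian $J_p$ does not produce a linear Jacobi equation with a non-negative coefficient, so the synthesized metric need not even have non-negative radial curvature, and the Riccati/Heintze--Karcher comparison for $h_p$ has no sign hypothesis to feed on). Second, even if both steps were repaired, optimization in $\tau$ would give $A_p/D_p^d\leq |B^d|/2^{d-1}$ plus a non-negative term proportional to the boundary volume $L_p$, i.e.\ a higher-dimensional Theorem~\ref{Thm.strong}; passing from that to Conjecture~\ref{Conj.any} requires showing the correction vanishes (equivalently $f_p(D_p)=0$ for some $p$), which is precisely the missing control over the cut locus that blocks the conjecture already for $d=2$. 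What can be salvaged from your outline without these obstacles is exactly the spherically symmetric situation -- and that is the case the paper actually proves, by the simpler route described above rather than by Fermi coordinates and a boundary comparison.
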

%

%
\begin{Remark}
Conjecture~\ref{Conj.any} makes sense also in $d=1$,
where its validity is trivial.
\end{Remark}

\end{document}